\newtheorem{theorem}{Theorem}[section]
\newtheorem{proposition}[theorem]{Proposition}
\newtheorem{lemma}[theorem]{Lemma}
\title[Automorphisms of $\mathbb{C}^2$ with a round Siegel domain]{Lacunary series, resonances, and automorphisms of $\mathbb{C}^2$ with a round Siegel domain}
\author{Fran\c cois Berteloot}
\address{Universit\'e Toulouse III, 
Institut Math\'ematique de Toulouse, B\^at. 1R2,
118 route de Narbonne,
F-31062 Toulouse Cedex 9, France. }
\email{francois.berteloot@math.univ-toulouse.fr} 
\author{Davoud Cheraghi}
\address{Department of Mathematics, 
Imperial College London, 
London, SW7 2AZ, United Kingdom}
\email{d.cheraghi@imperial.ac.uk}
\date{}
\subjclass[2010]{Primary 32H50 ; Secondary 32M05, 37F50}
\begin{document}

\begin{abstract}
We construct transcendental automorphims of $\mathbb{C}^{2}$ having an unbounded and regular Siegel domain.
\end{abstract}

\maketitle

\section{introduction}
As shown by Cremer in 1927, a holomorphic germ $f(z)=e^{2\pi i \theta} z+ a_2 z^2+\cdots$, with irrational rotation number 
$\theta$, is not always linearisable at the origin \cite{Cre38}. 
However, Siegel proved in 1940 that linearisability occurs when $\theta$ is Diophantine \cite{Sie42}. Thirty years later  
Brjuno extended Siegel's result showing that the arithmetic condition $\sum_n q_n^{-1} \log q_{n+1} < + \infty$ 
on the best rational approximants $(p_n/q_n)_{n\geq 0}$ of $\theta$ is a sufficient condition for linearisability. 
In 1995, Yoccoz proved that if the Brjuno condition does not hold for some $\theta$, then the quadratic map 
$e^{2\pi i \theta} z+ z^2$ is not linearisable \cite{Yoc95}.

When $f$ is a rational function or entire, any such linearisable fixed point gives rise to a Siegel domain, i.e.\ a Fatou component of 
$f$ on which $f$ is holomorphically conjugate to the irrational rotation $z\mapsto e^{2\pi i \theta} z$. 
Boundaries of Siegel discs are usually very irregular \cite{Mc04,PZ04} but, Perez-Marco (unpublished manuscript 
\cite{Perez-Marco97}) and Avila-Buff-Cheritat \cite{ABC04} proved that Siegel domains with smooth boundaries do actually exist.

Siegel domains also exist in higher dimension and the canonical class of biholomorphic models for them is that of Reinhardt 
domains. 
This is a rather large class where one does not expect a Siegel domain to be holomorphically equivalent to a familiar 
Reinhardt domain, such as the Euclidean ball or the poly-disc. 
It is very likely that, generically, two distinct Siegel domains in $\mathbb{C}^{k\ge2}$ are not bi-holomorphic equivalent.
These questions remain widely open and very few explicit examples are known \cite{FornSibony1992,Bedford2018}. 
 
Here, we exhibit examples of shear automorphisms of $\mathbb{C}^{2}$ for which the product of the complex line 
with the Euclidean unit disc $\mathbb{C}\times\Delta$ is a Siegel domain. 
Indeed, the Siegel domain is the unique Fatou component of the map. 
These automorphisms are tangent to a rotation by an irrational number $R_\theta:=(e^{2\pi i \theta} w, e^{2\pi i \theta} z)$, 
where $\theta$ does not satisfy the Brjuno's arithmetic condition. 
This is in strong contrast with Yoccoz's theorem in dimension one, and is related to the resonance in the tangent map $R_\theta$. 

Assume that $\theta \in \mathbb{R} \setminus \mathbb{Q}$ with best rational approximants $(p_n/q_n)_{n\geq 0}$. 
Consider the formal power series 
\[\varphi_{\theta,q} (z):=ze^{2\pi i\theta}\sum_{n=0}^\infty (1-e^{2\pi i q_n \theta}) z^{q_n},\]
and let 
\[A_{\theta,q}(w,z) :=(e^{2\pi i \theta}w+ \varphi_{\theta,q}(z), e^{2\pi i \theta}z).\]

\begin{theorem}\label{T:Siegel-component-soft}
Assume that $\theta$ is an irrational number whose best rational approximants $(p_n/q_n)_{n\geq 0}$ satisfies 
$\lim_{n\to \infty} q_n^{-1}\log q_{n+1}=\infty$. 
Then, 
\begin{itemize}
\item[(i)] $A_{\theta, q}$ is an automorphism of $\mathbb{C}^2$, 
\item[(ii)] $\mathbb{C} \times \Delta$ is a Fatou component of $A_{\theta,q}$, where $A_{\theta, q}$ on $\mathbb{C} \times \Delta$ 
is analytically conjugate to a rotation,
\item[(iii)] for every $(w,z) \in \mathbb{C}^2$ with $|z|>1$ the orbit of $(w,z)$ under $A_{\theta,q}$ is unbounded.
\end{itemize}
\end{theorem}

See Theorem~\ref{T:Siegel-component} for a generalisation of the above theorem. 
Moreover, by carefully choosing the coefficients in a power series of the above form, we obtain 
examples of automorphisms of $\mathbb{C}^2$ enjoying additional dynamical properties. 
See Theorem~\ref{T:Julia}. 

\section{Proofs}
Let $\theta$ be an irrational real number, with continued fraction expansion 
\[\theta=a_0+ \cfrac{1}{a_1+ \cfrac{1}{a_2+ \cfrac{1}{a_3+ \dots} } },\] 
with integers $a_i\geq 1$, for all $i \geq 1$, and $a_0\in \mathbb{Z}$.  
The \textit{best rational approximants} of $\theta$ are defined as the sequence of rational numbers 
\[\frac{p_n}{q_n}= a_0+\cfrac{1}{a_1+ \cfrac{1}{a_2+ \ddots + \cfrac{1}{a_n}}},  \quad n\geq 0.\]
We shall assume that $p_n$ and $q_n$ are relatively prime, and $q_n \geq 0$, for all $n\geq 0$. 
It is known that the sequence of integers $p_n$ and $q_n$, for $n\geq 0$, may be defined according to the following recursive 
relations 
\begin{gather*}
p_{-2}=q_{-1}=0 , \qquad p_{-1}=q_{-2}=1, \\
q_{n+1}= a_{n+1} q_n+ q_{n-1}, \qquad p_{n+1}= a_{n+1} p_n+ p_{n-1}.
\end{gather*}
The sequence $(q_n)_{n\geq 0}$ has exponential growth. For instance, since $a_i \geq 1$, the sequence $q_n$ grows at least 
as fast as the Fibonacci sequence. Indeed, by an inductive argument, one may see that 
\begin{equation}\label{E:exponential-growth}
q_n \geq 2^{(n-1)/2}, \quad \forall n\geq 0. 
\end{equation}
Moreover, one has the following estimate on the size of approximation
\begin{equation}\label{E:fine-approximation}
\left| \theta - \frac{p_n}{q_n}  \right |  \leq \frac{1}{q_n q_{n+1}}, \quad \forall n \geq 0. 
\end{equation}
One may consult \cite{Khin64} or \cite{Schmidt1980} for basic properties of continued fractions. 

Given $z_0 \in \mathbb{C}$ and $r > 0$, we define 
\[\Delta(z_0, r):=\{z\in \mathbb{C}  \;\colon\; |z-z_0|<r\}, \quad \Delta:= \Delta(0,1).\] 

Let us consider the (complex) Banach space $(l^\infty,\Vert\;\Vert_\infty)$, where 
$\Vert (u_n)_{n\geq 0}\Vert_\infty$ is defined as $\sup_{n\geq 0} \vert u_n\vert$. 
We assume $u_n \in \mathbb{C}$, for $n\geq 0$. 

\begin{proposition}\label{PropRadii}
Let $\theta$ be an irrational number with best rational approximants $(p_n/q_n)_{n\geq 0}$. 
For every subsequence $q':=(q'_n)_{n\geq 0}$ of $(q_n)_{n\geq 0}$, the following hold:  
\begin{itemize}
\item[(i)] the Taylor series 
\[ h_{q'}(z):=z\sum_{n=0}^\infty z^{q'_{n}}\]
is holomorphic on $\Delta$, but does not have holomorphic extension across any $z \in \partial \Delta$;
\item[(ii)] for any $u=(u_n)_{n\geq 0} \in l^\infty$ the Taylor series 
\[ h_{q',u}(z):=z\sum_{n=0}^\infty u_{q'_{n}}z^{q'_{n}}\]
is holomorphic on $\Delta$;
\item[(iii)] there exists a dense $G_\delta$ subset $E_1$ of $l^\infty$ (which only depends on $q'$) such that 
for every $u \in E_1$, $ h_{q',u}(z)$ does not have holomorphic extension across any $z \in \partial \Delta$. 
\end{itemize}
\end{proposition}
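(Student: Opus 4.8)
The plan is to establish the three items in the order stated, deriving (iii) from (i) by a Baire-category argument and treating (ii) as a routine convergence estimate.

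\emph{Item (i).} The point is that $q'$ inherits from $(q_n)$ a very strong growth: writing $q'_n=q_{k_n}$ with $k_0<k_1<\cdots$, one has $k_n\ge n$, and $(q_n)$ is non-decreasing, so \eqref{E:exponential-growth} yields $q'_n\ge q_n\ge 2^{(n-1)/2}$; in particular $q'_n/n\to\infty$. The Taylor coefficients of $h_{q'}(z)=\sum_{n\ge0}z^{q'_n+1}$ are bounded and infinitely many of them equal $1$, so the radius of convergence of $h_{q'}$ is exactly $1$. I would then invoke the Fabry gap theorem: a power series of radius of convergence $1$ whose nonzero coefficients sit at exponents $\lambda_k$ with $\lambda_k/k\to\infty$ has $\partial\Delta$ as natural boundary. (If one prefers to avoid Fabry, the recursion $q_{m+2}=a_{m+2}q_{m+1}+q_m\ge q_{m+1}+q_m\ge 2q_m$ gives $q'_{n+2}\ge 2q'_n$, so the even- and odd-indexed subsequences of $q'$ are Hadamard-lacunary with ratio $2$; but recombining the two pieces still needs the Ostrowski over-convergence argument in a form close to Fabry's, so citing Fabry is the clean route.) Thus $h_{q'}$ has no holomorphic continuation across any $\zeta\in\partial\Delta$.

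\emph{Item (ii).} For $|z|<1$, $\sum_{n\ge0}|u_{q'_n}|\,|z|^{q'_n+1}\le\|u\|_\infty\sum_{m\ge1}|z|^m<\infty$ (the exponents $q'_n+1$ being distinct positive integers), so the series defines a holomorphic function on $\Delta$. Moreover the same estimate shows that $u\mapsto h_{q',u}$ is continuous from $l^\infty$ into the space of holomorphic functions on $\Delta$ with the topology of locally uniform convergence — a fact I will use in (iii).

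\emph{Item (iii).} Fix once and for all a countable dense set $\{\zeta_k\}_{k\ge0}\subset\partial\Delta$, and for $\zeta\in\partial\Delta$ put $A_\zeta:=\{u\in l^\infty:h_{q',u}$ has no holomorphic continuation across $\zeta\}$. Since the set of boundary points across which a function holomorphic on $\Delta$ continues is open in $\partial\Delta$, every $u$ in $E_1:=\bigcap_kA_{\zeta_k}$ yields a function $h_{q',u}$ whose continuation set, being open and disjoint from a dense set, is empty — i.e. $\partial\Delta$ is its natural boundary. So it suffices to show that each $A_\zeta$ is a dense $G_\delta$ in $l^\infty$; then $E_1$ is dense $G_\delta$ by the Baire category theorem in the Banach space $l^\infty$, and it depends only on $q'$ (and the fixed enumeration). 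For the $G_\delta$ property, write the complement of $A_\zeta$ as $\bigcup_{\varepsilon\in\mathbb{Q}^+,\,M\in\mathbb{N}}B_{\zeta,\varepsilon,M}$, where $B_{\zeta,\varepsilon,M}$ is the set of $u$ for which $h_{q',u}$ extends to a holomorphic function of modulus $\le M$ on $\Delta(\zeta,\varepsilon)$; each $B_{\zeta,\varepsilon,M}$ is closed, because if $u^{(j)}\to u$ in $l^\infty$ with witnessing extensions $g_j$, then $h_{q',u^{(j)}}\to h_{q',u}$ locally uniformly on $\Delta$ by (ii) while Montel's theorem extracts from $(g_j)$ a locally uniform limit $g$ on $\Delta(\zeta,\varepsilon)$ with $|g|\le M$, and $g=h_{q',u}$ on $\Delta(\zeta,\varepsilon)\cap\Delta$ by uniqueness of analytic continuation. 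For density, given $u_0\in l^\infty$ and $\delta>0$: if $u_0\in A_\zeta$ there is nothing to do; otherwise $h_{q',u_0}$ continues across $\zeta$, and then for any $c\in\mathbb{C}$ with $0<|c|<\delta$ the function $h_{q',\,u_0+c\mathbf{1}}=h_{q',u_0}+c\,h_{q'}$ cannot continue across $\zeta$, for otherwise $h_{q'}=c^{-1}\big(h_{q',\,u_0+c\mathbf{1}}-h_{q',u_0}\big)$ would, contradicting (i); hence $u_0+c\mathbf{1}\in A_\zeta$ and $\|(u_0+c\mathbf{1})-u_0\|_\infty=|c|<\delta$.

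\emph{Main obstacle.} The only substantial input is item (i), where the lacunary structure of $(q_n)$ — as opposed to soft arguments — does the work, via the Fabry gap theorem. Items (ii) and (iii) are then soft: (iii) is the familiar ``generic natural boundary'' Baire argument, whose only mildly delicate points are the closedness of the sets $B_{\zeta,\varepsilon,M}$ (needing Montel together with the continuity from (ii)) and the observation that the density of $A_\zeta$ can be arranged uniformly in $\zeta$ by a single one-parameter perturbation $u_0\mapsto u_0+c\mathbf{1}$ combined with (i).
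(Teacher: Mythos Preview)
Your proof is correct, but for item (iii) it takes a genuinely different route from the paper. The paper exploits the gap structure a second time: it defines $E^k=\{u\in l^\infty:\exists N,\ |\sum_{n=0}^N u_{q'_n}|>k\}$, checks that each $E^k$ is open and dense, and sets $E_1=\bigcap_k E^k$. For $u\in E_1$ the series $\sum_n u_{q'_n}$ diverges, so $h_{q',u}$ fails to converge at $z=1$ and its radius of convergence is exactly $1$; a single global application of the gap theorem then makes all of $\partial\Delta$ the natural boundary at once. Your argument instead treats each boundary point separately, proving that $A_\zeta$ is a dense $G_\delta$ via a Montel/normal-families closedness argument and a one-parameter perturbation $u_0\mapsto u_0+c\mathbf{1}$ for density, and then intersects over a countable dense set of $\zeta$'s. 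The paper's approach is shorter and avoids Montel entirely; your approach is more robust, since it would still work in settings where (i) holds for reasons other than a gap condition, and it makes the role of (i) in (iii) completely transparent.

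On item (i), your invocation of Fabry's gap theorem via $q'_n/n\to\infty$ is in fact the cleaner justification: the paper asserts Hadamard lacunarity $\liminf q'_{n+1}/q'_n>1$ from \eqref{E:exponential-growth}, but the ratio $q_{n+1}/q_n$ can be arbitrarily close to $1$ along the full sequence (take $a_{n}$ large and $a_{n+1}=1$), so strictly speaking it is the Fabry condition, not Hadamard's, that \eqref{E:exponential-growth} delivers.
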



The series in the above proposition are special cases of Lacunary series, studied first by Weierstrass; 
see \cite{Kahane64} or \cite{ErdMac1954}. 

\begin{proof}
By Equation~\eqref{E:exponential-growth}, the series $h_{q',u}$ satisfies Hadamard's lacunarity 
condition \cite{Hadamard1892}, 
that is, $\liminf_{n\geq 0} (q'_{n+1}/q'_n) > 1$. 
This implies that, if the radius of convergence of the series $h_{q',u}(z)$ about $0$ is equal to some $r>0$ then 
$h_{q',u}(z)$ does not extend holomorphically across any $z$ in $\partial \Delta(0,r)$. 

Evidently, the radius of convergence of $h_{q'}$ is equal to $+1$. 
The radius of convergence of $h_{q',u}(z)$ is at least $+1$. 
Below we show that the radius of convergence is indeed equal to $+1$ for many series of the form $h_{q',u}(z)$.  

For $k\geq 1$, consider the set 
\[E^k:=\left \{(u_n)_{n\geq 0}\in l^\infty \;  \colon \; \exists N\in\mathbb{N}, \Big | \displaystyle{\sum}_{n=0}^N u_{q_n'} \Big| >k \right\}.\]
For every $k \in \mathbb{N} $, $E^k$ is open and dense in $l^\infty$. 
By Baire's theorem, $E_1=\cap_{k \in \mathbb{N}} E^k$ is a dense subset of $l^\infty$. 
For every $u\in E_1$, the series $\sum_{n\geq 0} u_{q_n'}$ is divergent. 
\end{proof}

Let $h_{q',u}:\Delta \to \mathbb{C}$ be a map as in Proposition~\ref{PropRadii}, with $u\in E_1$.  
We may consider the shear map 
\[S_{h_{q',u}}(w,z)= (w+h_{q',u}(z), z).\]
This is an automorphism of $\mathbb{C} \times \Delta$, with inverse $S_{h_{q',u}}^{-1}= S_{-h_{q',u}}$. 
For $\mu \in \mathbb{R}$, let us consider the two dimensional rotation $R_\mu: \mathbb{C}^2 \to \mathbb{C}^2$,  
\[R_\mu (w,z)= (e^{2\pi i \mu}w, e^{2\pi i \mu}z).\]

We define the automorphism $A_{\mu,q',u}: \mathbb{C} \times \Delta \to \mathbb{C} \times \Delta$ as 
\begin{equation}\label{E:conjugacy}
A_{\mu,q',u}=  S_{h_{q',u}}^{-1} \circ R_\mu  \circ  S_{h_{q',u}}= S_{-h_{q',u}} \circ R_\mu  \circ  S_{h_{q',u}}.
\end{equation}
We note that 
\begin{align*}
A_{\mu,q',u}(w,z) & = S_{-h_{q',u}} \circ R_\mu  \circ  S_{h_{q',u}}(w,z) \\
&= S_{-h_{q',u}} \circ R_\mu (w+ h_{q',u}(z),z)\\
&= S_{-h_{q',u}} (e^{2\pi i \mu} w+ e^{2\pi i \mu} h_{q',u}(z), e^{2\pi i \mu} z ) \\
&= \left(e^{2\pi i \mu} w+ e^{2\pi i \mu} h_{q',u}(z)- h_{q',u}(e^{2\pi i \mu} z) , e^{2\pi i \mu} z \right).
\end{align*}

Let us define the map $\varphi_{\mu,q',u}: \Delta \to \mathbb{C}$ as 
\[\varphi_{\mu,q',u}(z)= e^{2\pi i \mu} h_{q',u}(z)- h_{q',u}(e^{2\pi i \mu} z),\]
so that 
\[A_{\mu,q',u}(w,z) =(e^{2\pi i \mu}w+ \varphi_{\mu,q',u}(z), e^{2\pi i \mu}z).\]
By Equation~\eqref{E:conjugacy}, for all $N \in \mathbb{N}$, 
\begin{equation}\label{E:composition}
A_{\mu,q',u}^{\circ N}(w,z) =A_{N\mu,q',u}(w,z)=(e^{2\pi i N\mu}w+ \varphi_{N\mu,q',u}(z), e^{2\pi i N\mu}z).
\end{equation}
When the sequence $u \in l^\infty$ is the constant sequence $u_n=1$ we shall simply denote the above maps by 
$A_{\mu,q'}$ and $\varphi_{\mu,q'}$, that is, 
\[\varphi_{\mu,q'}(z)=e^{2\pi i \mu} h_{q'}(z)- h_{q'}(e^{2\pi i \mu} z),\]
and 
\[A_{\mu,q'}(w,z) =(e^{2\pi i \mu}w+ \varphi_{\mu,q'}(z), e^{2\pi i \mu}z).\]

Evidently, 
\[\varphi_{\mu,q',u} (z)=ze^{2\pi i\mu}\sum_{n=0}^\infty u_{q'_{n}}(1-e^{2\pi i q'_n \mu}) z^{q'_n}.\]
\textit{A priori}, the above series is only defined and holomorphic on the unit disk $\Delta$. 
However, when $\mu=\theta$ and for special choices of $\theta$, the function
$\varphi_{\theta,q',u}$ becomes entire. This phenomenon is crucial in our study.

\begin{proposition}\label{P:growth-extra}
Assume that the best rational approximants $(p_n/q_n)_{n\geq 0}$ of $\theta \in \mathbb{R} \setminus \mathbb{Q}$ 
satisfy $q_n^{-1} \log q_{n+1} \to \infty$, as $n\to \infty$. 
Then, for any subsequence $q'$ of $(q_n)_{n\geq 0}$ and any $u\in l^\infty$, the Taylor series of 
$\varphi_{\theta,q',u}$ defines an entire holomorphic map on $\mathbb{C}$. 

In particular, $A_{\theta, q', u}$ is an automorphism of $\mathbb{C}^2$. 
\end{proposition}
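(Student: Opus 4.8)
The plan is to show that the coefficients of the series
\[\varphi_{\theta,q',u}(z)=z e^{2\pi i \theta}\sum_{n\ge 0} u_{q'_n}\bigl(1-e^{2\pi i q'_n \theta}\bigr) z^{q'_n}\]
decay fast enough to force an infinite radius of convergence. The key gain over $h_{q',u}$ is the factor $1-e^{2\pi i q'_n\theta}$, whose modulus is controlled by how close $q'_n\theta$ is to an integer. Since $q'_n$ is one of the denominators $q_m$ of a best rational approximant, \eqref{E:fine-approximation} gives $|q_m\theta-p_m|\le 1/q_{m+1}$, hence $|1-e^{2\pi i q'_n\theta}|\le 2\pi\,\|q'_n\theta\|\le 2\pi/q_{m+1}$, where $q_{m+1}$ is the next denominator in the \emph{full} sequence $(q_n)$ (not in the subsequence $q'$). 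Writing $q'_n=q_{m(n)}$, the relevant coefficient of $z^{q'_n+1}$ is bounded in modulus by $2\pi\|u\|_\infty / q_{m(n)+1}$.

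Next I would estimate the radius of convergence $\rho$ via
\[\frac{1}{\rho}=\limsup_{n\to\infty}\bigl(2\pi\|u\|_\infty/q_{m(n)+1}\bigr)^{1/(q'_n+1)}
\le \limsup_{n\to\infty}\exp\!\left(\frac{\log(2\pi\|u\|_\infty)-\log q_{m(n)+1}}{q_{m(n)}+1}\right).\]
Here is where the hypothesis enters decisively: $q_n^{-1}\log q_{n+1}\to\infty$ means $\log q_{m(n)+1}/q_{m(n)}\to\infty$ along \emph{every} subsequence, so the exponent tends to $-\infty$ and hence $1/\rho=0$, i.e.\ $\rho=+\infty$. (The term $\log(2\pi\|u\|_\infty)/q_{m(n)}$ is harmless: if $u=0$ the series is identically zero and there is nothing to prove, and otherwise this term $\to 0$.) Therefore the Taylor series of $\varphi_{\theta,q',u}$ converges on all of $\mathbb{C}$ and defines an entire function.

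Finally I would deduce the automorphism claim. Since $\varphi_{\theta,q',u}$ is entire, the shear-type map
\[A_{\theta,q',u}(w,z)=\bigl(e^{2\pi i \theta}w+\varphi_{\theta,q',u}(z),\,e^{2\pi i \theta}z\bigr)\]
is a well-defined holomorphic self-map of $\mathbb{C}^2$; it is injective and surjective with holomorphic inverse
\[A_{\theta,q',u}^{-1}(w,z)=\bigl(e^{-2\pi i \theta}\,(w-\varphi_{\theta,q',u}(e^{-2\pi i\theta}z)),\,e^{-2\pi i \theta}z\bigr),\]
as one checks by direct substitution; hence it is an automorphism of $\mathbb{C}^2$. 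I expect the only genuine point requiring care to be the bookkeeping between the subsequence $q'$ and the full denominator sequence $(q_n)$: one must make sure that the hypothesis $q_n^{-1}\log q_{n+1}\to\infty$ is applied to the index $m(n)+1$ that is the \emph{successor in the full sequence} of the index $m(n)$ with $q'_n=q_{m(n)}$, which is exactly what \eqref{E:fine-approximation} delivers; the rest is a routine root-test computation.
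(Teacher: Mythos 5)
Your proof is correct and follows essentially the same route as the paper: both rest on the bound $|1-e^{2\pi i q_n\theta}|\le 2\pi/q_{n+1}$ from \eqref{E:fine-approximation} and the hypothesis $q_n^{-1}\log q_{n+1}\to\infty$, applied via the root test to force infinite radius of convergence. The only cosmetic difference is that you carry the subsequence indices $m(n)$ through the estimate directly, whereas the paper proves infinite radius for the full-sequence series $\varphi_{\theta,q,u}$ and then passes to $\varphi_{\theta,q',u}$ as a subseries.
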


By choosing $a_n$ large enough, one may identify values of $\theta$ such that $q_n$ grows arbitrarily fast. 
In particular, there are irrational numbers $\theta$ for which $q_n^{-1} \log q_{n+1} \to +\infty$ as $n \to +\infty$. 

\begin{proof}
By the estimate in Equation~\eqref{E:fine-approximation}, we have $|q_n \theta - p_n| \leq 1/q_{n+1}$. 
On the other hand, for any $\beta \in [-1/2,1/2]$, $|1-e^{2\pi i \beta}| \leq 2\pi |\beta|$. 
In particular,  $|1-e^{2\pi i q_n \theta} | \leq 2\pi /q_{n+1}$. 
By the hypothesis of the proposition on the growth of $q_n$, we conclude that the radius of convergence of the 
Taylor series for $\varphi_{\theta,q,u}$ is $\infty$. The Taylor series of $\varphi_{\theta,q',u}$ is a subseries
of the Taylor series of $\varphi_{\theta,q,u}$, and hence, its radius of convergence is also $\infty$.
\end{proof}

We shall also need to identify some $\mu\in \mathbb{R}$ for which the convergence radius of the Taylor series of $\varphi_{\mu,q',u}$
is precisely equal to $1$.

\begin{lemma}\label{LemMu}
Let $\theta \in \mathbb{R} \setminus \mathbb{Q}$ with best rational approximants $(p_n/q_n)_{n\geq 0}$ and let  
$q':=(q'_n)_{n\geq 0}$ be a subsequence of $(q_n)_{n\geq 0}$. 
There exist $\mu\in \mathbb{R}$ and a dense $G_\delta$-subset $E_1$ of $l^\infty$ such that 
for any $u \in E_1$, the convergence radii of the Taylor series of $\varphi_{\mu,q',u}$ and $\varphi_{\mu,q'}$ are equal to $1$.
\end{lemma}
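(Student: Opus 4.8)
The plan is to pick one well-adapted $\mu$ and then reproduce the Baire‑category scheme from the proof of Proposition~\ref{PropRadii}. The coefficient of $z^{q'_n+1}$ in the Taylor series of $\varphi_{\mu,q',u}$ equals $e^{2\pi i\mu}u_{q'_n}(1-e^{2\pi i q'_n\mu})$, and for the constant sequence it equals $e^{2\pi i\mu}(1-e^{2\pi i q'_n\mu})$; in both cases its modulus is $\leq 2\Vert u\Vert_\infty$, so the convergence radius is always $\geq 1$, and the whole task is to prevent it from being strictly bigger than $1$. The key point is that $|1-e^{2\pi i q'_n\mu}|=2\sin(\pi\Vert q'_n\mu\Vert)$, where $\Vert\cdot\Vert$ is the distance to $\mathbb{Z}$, so these coefficients decay only polynomially — forcing the radius to be exactly $1$ — as soon as $\Vert q'_n\mu\Vert$ is bounded below by something of the order of $1/q'_n$. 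I would therefore take $\mu$ to be a badly approximable irrational, e.g.\ a quadratic irrational: then there is $c=c(\mu)>0$ with $\Vert q\mu\Vert\geq c/q$ for all $q\geq1$, hence for a suitable $c'>0$
\[
c'/q'_n\;\leq\;\bigl|1-e^{2\pi i q'_n\mu}\bigr|\;\leq\;2,\qquad n\geq0.
\]
(Alternatively one could invoke that a residual, full‑measure set of $\mu$ works, but an explicit choice is cleaner.)

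With this $\mu$ fixed, the radius of $\varphi_{\mu,q'}$ is immediate: the modulus of its $(q'_n+1)$-th Taylor coefficient lies in $[c'/q'_n,2]$, so its $(q'_n+1)$-th root tends to $1$ because $q'_n\to\infty$ by \eqref{E:exponential-growth}; together with the trivial bound $\geq1$ this gives radius $=1$. This is the only step using anything about $\mu$ beyond irrationality.

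For the $G_\delta$ set, I would put, for integers $k,N\geq1$,
\[
E^{k,N}:=\bigcup_{n\geq N}\Bigl\{u\in l^\infty:\;|u_{q'_n}|\,\bigl|1-e^{2\pi i q'_n\mu}\bigr|>\bigl(1-\tfrac1k\bigr)^{q'_n}\Bigr\}.
\]
Each $E^{k,N}$ is open, as a union of preimages of open rays under the continuous maps $u\mapsto|u_{q'_n}|$ (note $|1-e^{2\pi i q'_n\mu}|>0$ since $\mu\notin\mathbb{Q}$). To see $E^{k,N}$ is dense, given $u^0\in l^\infty$ and $\delta>0$ one uses
\[
\frac{(1-1/k)^{q'_n}}{|1-e^{2\pi i q'_n\mu}|}\;\leq\;\frac{q'_n}{c'}\Bigl(1-\tfrac1k\Bigr)^{q'_n}\longrightarrow0
\]
to choose $n\geq N$ with this ratio $<\delta$ and then alter the single coordinate $u_{q'_n}$ by less than $\delta$ so that $|u_{q'_n}|$ exceeds it; the resulting $u$ lies in $E^{k,N}$ and $\Vert u-u^0\Vert_\infty<\delta$. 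By Baire's theorem $E_1:=\bigcap_{k,N\geq1}E^{k,N}$ is a dense $G_\delta$ subset of $l^\infty$.

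Finally, for $u\in E_1$ and every $k$ there are infinitely many $n$ with $|u_{q'_n}|\,|1-e^{2\pi i q'_n\mu}|>(1-1/k)^{q'_n}$; since the left‑hand side is precisely the modulus of the $(q'_n+1)$-th Taylor coefficient of $\varphi_{\mu,q',u}$, and that modulus is also $\leq2\Vert u\Vert_\infty$, one obtains $\limsup_n\bigl(|u_{q'_n}|\,|1-e^{2\pi i q'_n\mu}|\bigr)^{1/(q'_n+1)}=1$, i.e.\ the radius of $\varphi_{\mu,q',u}$ equals $1$; combined with the second paragraph this proves the lemma. I expect no serious obstacle here: the one thing to get right is the coordination between the choice of $\mu$ and the density step — one needs $|1-e^{2\pi i q'_n\mu}|$ not to be too small so that the thresholds $(1-1/k)^{q'_n}/|1-e^{2\pi i q'_n\mu}|$ still tend to $0$ — after which the argument is a routine replay of Proposition~\ref{PropRadii}, with the exponential growth \eqref{E:exponential-growth} of $q'$ driving both the density of the $E^{k,N}$ and the final $\limsup$.
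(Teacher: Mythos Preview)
Your argument is correct and reaches the same conclusion by a somewhat different path. The paper does not take $\mu$ off the shelf; instead it first passes to a further subsequence $(q''_n)$ of $(q'_n)$ with $q''_{n+1}\ge 4q''_n$ and then builds $\mu$ by a nested-interval construction arranged so that $|1-e^{2\pi i q''_n\mu}|\ge\sqrt{2}$ for every $n$. With this \emph{uniform} lower bound along $q''$, the paper can simply recycle Proposition~\ref{PropRadii}: its $E_1$ is the dense $G_\delta$ set furnished by that proposition applied to $q''$, and since the factors $|1-e^{2\pi i q''_n\mu}|$ lie in $[\sqrt{2},2]$ the Cauchy--Hadamard $\limsup$ transfers in one line from $h_{q'',u}$ to $\varphi_{\mu,q',u}$. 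Your quadratic-irrational choice gives only the weaker bound $|1-e^{2\pi i q'_n\mu}|\ge c'/q'_n$, which would not allow reusing Proposition~\ref{PropRadii} verbatim; you compensate by building the $G_\delta$ sets $E^{k,N}$ directly from the coefficients of $\varphi_{\mu,q',u}$, and the density step works precisely because $(q'_n/c')(1-1/k)^{q'_n}\to 0$. The trade-off: the paper's proof is more modular (it leans on an already-proved proposition and needs only a one-line $\limsup$ transfer), whereas yours is more self-contained, with an explicit $\mu$ and no auxiliary subsequence $q''$.
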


\begin{proof}
Let $(q_n'')_{n\geq 0}$ be a subsequence of $q'$ such that for all $n\geq 0$ we have $q_{n+1}'' \geq 4 q_n''$.  
We may inductively choose a nest of closed connected intervals $I_n$ on $[0,1]$, for $n \geq 0$, 
such that each interval $I_n$ has length $1/q''_n$ and each set $q''_n I_{n+1}$ is contained in $[1/4,3/4] + \mathbb{Z}$. 
We define $\mu$ as the unique element in the nest $I_n$. 
It follows that for all $n\geq 0$, $|1- e^{2\pi i q''_n \mu}| \geq \sqrt{2}$.

According to Proposition \ref{PropRadii}, there exists a dense $G_\delta$-subset $E_1$ of $l^\infty$ such that 
for any $u\in E_1$ the radius of convergence of $h_{q'',u}(z)=z\sum_{n=0}^\infty u_{q''_{n}}z^{q''_{n}}$ is equal to $1$.
By Cauchy-Hadamard's formula, we must have $\limsup_n \vert u_{q''_{n}}\vert^{1/q''_n}=1$. 
Since $2\ge |1- e^{2\pi i q''_{n} \mu}| \geq \sqrt{2}$, it follows that for $u\in E_1$, 
\[\limsup_n \vert u_{q''_{n}} (1- e^{2\pi i q''_{n} \mu} )\vert^{1/q''_n}=1.\] 
As $q''$ is a subsequence of $q'$, and $\limsup_n   \vert u_{q'_{n}} (1- e^{2\pi i q'_{n} \mu} )\vert^{1/q'_n} \le 1$, 
we conclude that 
\[\limsup_n   \vert u_{q'_{n}} (1- e^{2\pi i q'_{n} \mu} )\vert^{1/q'_n} = 1.\] 
Therefore, for $u \in E_1$, the radius of convergence of $\varphi_{\mu,q',u} (z)$ is equal to $1$.

The above arguments also work when the sequence $u$ is constant equal to $1$ and thus the convergence
radius of $\varphi_{\mu,q'} (z)$ equals $1$ as well.
\end{proof}

Assume that $\theta$ is an irrational number which satisfies the hypothesis of Proposition~\ref{P:growth-extra}. 
For any subsequence $q'$ of $q$ and any $u\in l^\infty$, 
\[A_{\theta,q',u} (w,z)= (e^{2\pi i \theta} w+ \varphi_{\theta,q',u}(z), e^{2\pi i \theta}z)\]
is an automorphism of $\mathbb{C}^2$. 
The map $A_{\theta,q',u}$ has a constant Jacobian of size $1$. So, it is area preserving.
Moreover, $A_{\theta,q',u}$ preserves the origin, with derivative 
\[\mathrm{D}A_{\theta,q',u}(0,0)= \left(
\begin{array}{cc}
e^{2\pi i \theta} & 0 \\
0 & e^{2\pi i \theta}
\end{array}
\right).\]

Any irrational number $\theta$ satisfying $\lim_{n\to \infty} q_n^{-1} \log q_{n+1}=\infty$ does not satisfy the Brjuno's 
arithmetic condition for the linearisability \cite{Sie42,Brj71,Her87-2,Yoc95}.
However, here $D A_{\theta,q',u}(0,0)$ enjoys a resonance condition. 
By virtue of the commutative relation in Equation~\eqref{E:conjugacy}, $A_{\theta,q',u}$ is conjugate to the rotation $R_\theta$ 
on the round domain $\mathbb{C} \times \Delta$, via the map $S_{h_{q',u}}$. 
We would like to understand the global dynamics of $A_{\theta, q', u}$. In particular, if the Siegel disk of $A_{\theta, q',u}$ 
centred at $(0,0)$ extends beyond $\mathbb{C} \times \Delta$. 

Let $F: \mathbb{C}^n \to \mathbb{C}^n$ be a holomorphic map. We say that $\Omega$ is a Fatou component for the iterates 
of $F$, if the family $\{F^{\circ n}\}_{n\geq 0}$ forms a locally equi-continuous family of maps near any point in $\Omega$. 
One may consult \cite{Mi06,CG93,MNTU2000} for classic books in complex dynamics in dimension one, 
and refer to \cite{CGSY03,Sibony1999} for general theory of complex dynamics in higher dimensions. 

\begin{theorem}\label{T:Siegel-component}
Assume that $\theta$ is an irrational number whose best rational approximants $(p_n/q_n)_{n\geq 0}$ satisfies 
$\lim_{n\to \infty} q_n^{-1}\log q_{n+1}=\infty$.
Let $q'$ be a subsequence of $(q_n)_{n\geq 0}$. 
There exists a dense $G_\delta$-subset $E_1$ of $l^\infty$ such that for every $u\in E_1$, 
$\mathbb{C} \times \Delta$ is a Fatou component of $A_{\theta,q', u}$, and for every $(w,z) \in \mathbb{C}^2$ with $|z|>1$ 
the orbit of $(w,z)$ under $A_{\theta,q', u}$ is unbounded.
 
Moreover, the same conclusions hold for $A_{\theta,q'}$, that is, for the constant sequence $u_n\equiv 1$.  
\end{theorem}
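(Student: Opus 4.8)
The plan is to exploit the conjugacy relation from Equation~\eqref{E:conjugacy} on the domain $\mathbb{C}\times\Delta$ and, separately, to control the orbits of points with $|z|>1$ directly from the formula in Equation~\eqref{E:composition}. First I would fix a subsequence $q'$ of $(q_n)_{n\geq 0}$ and take $E_1$ to be the dense $G_\delta$-subset provided by Proposition~\ref{PropRadii}(iii), so that for every $u\in E_1$ the lacunary series $h_{q',u}$ is holomorphic on $\Delta$ with no holomorphic extension across any boundary point of $\Delta$. By Proposition~\ref{P:growth-extra}, for $\theta$ satisfying the growth hypothesis the map $\varphi_{\theta,q',u}$ is entire, so $A_{\theta,q',u}$ is an automorphism of $\mathbb{C}^2$; and the computation preceding Equation~\eqref{E:conjugacy} shows $S_{h_{q',u}}\circ A_{\theta,q',u}=R_\theta\circ S_{h_{q',u}}$ on $\mathbb{C}\times\Delta$. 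Since $R_\theta$ is an isometry of $\mathbb{C}\times\Delta$ (in any product metric that is invariant under rotations in each factor) and $S_{h_{q',u}}$ is a biholomorphism of $\mathbb{C}\times\Delta$ onto itself, the family $\{A_{\theta,q',u}^{\circ N}\}_{N\geq 0}$ is locally equicontinuous on $\mathbb{C}\times\Delta$: indeed $A_{\theta,q',u}^{\circ N}=S_{h_{q',u}}^{-1}\circ R_\theta^{\circ N}\circ S_{h_{q',u}}=S_{-h_{q',u}}\circ R_{N\theta}\circ S_{h_{q',u}}$, a composition of two fixed maps with a uniformly bounded (normal) family in between, staying inside $\mathbb{C}\times\Delta$. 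Hence $\mathbb{C}\times\Delta$ is contained in a Fatou component of $A_{\theta,q',u}$.

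The next step is to show that $\mathbb{C}\times\Delta$ is exactly a Fatou component, i.e.\ it is not properly contained in a larger one. This is where the non-continuation statement of Proposition~\ref{PropRadii}(iii) enters, and I expect it to be the main obstacle. The idea is that if the Fatou component $\Omega$ containing $\mathbb{C}\times\Delta$ were strictly larger, it would contain a point $(w_0,z_0)$ with $|z_0|=1$ (since the second coordinate of $A_{\theta,q',u}$ is the rotation $z\mapsto e^{2\pi i\theta}z$, the component $\Omega$ projects to a rotation-invariant open subset of $\mathbb{C}$, hence if it meets $\{|z|\le 1\}$ strictly it meets $\{|z|=1\}$, and then by openness it contains points with $|z|>1$). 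On a neighbourhood of such a boundary point, local equicontinuity of $\{A_{\theta,q',u}^{\circ N}\}$ would, via the conjugacy $A_{\theta,q',u}^{\circ N}=S_{-h_{q',u}}\circ R_{N\theta}\circ S_{h_{q',u}}$ and the fact that $S_{h_{q',u}}$ is the identity in the $z$-coordinate plus a shear in $w$ by $h_{q',u}(z)$, force the first-coordinate map $(w,z)\mapsto w+h_{q',u}(z)$ to extend continuously — and then, by a Morera/removable-singularity argument using that these are holomorphic maps, holomorphically — across an arc of $\partial\Delta$ in the $z$-variable. That contradicts the non-extendability of $h_{q',u}$ from Proposition~\ref{PropRadii}(iii). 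I would carry this out by extracting a locally uniform limit of a subsequence $A_{\theta,q',u}^{\circ N_j}$ near $(w_0,z_0)$, writing the limit explicitly in terms of a limit of $R_{N_j\theta}$ conjugated by $S_{h_{q',u}}$, and deducing that $h_{q',u}$ agrees near $z_0$ with a holomorphic function, the sought contradiction.

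For the unboundedness of orbits with $|z|>1$, I would use Equation~\eqref{E:composition}: $A_{\theta,q',u}^{\circ N}(w,z)=(e^{2\pi i N\theta}w+\varphi_{N\theta,q',u}(z),\,e^{2\pi i N\theta}z)$, where $\varphi_{N\theta,q',u}(z)=ze^{2\pi i N\theta}\sum_{n}u_{q'_n}(1-e^{2\pi i q'_n N\theta})z^{q'_n}$. Fix $z$ with $|z|>1$. The first coordinate has modulus at least $|\varphi_{N\theta,q',u}(z)|-|w|$, so it suffices to show $\sup_N|\varphi_{N\theta,q',u}(z)|=+\infty$. By Proposition~\ref{PropRadii}(iii) (for $u\in E_1$) or the Hadamard-lacunarity argument, the series $\sum_n u_{q'_n}z^{q'_n}$ has radius of convergence exactly $1$, hence diverges at our fixed $z$ with $|z|>1$; the multiplicative factors $(1-e^{2\pi i q'_n N\theta})$ are bounded by $2$, so they do not rescue convergence, but to get a divergent \emph{supremum over $N$} I would instead argue that for each fixed large $n_0$ one can choose $N$ (using density of $\{N\theta \bmod 1\}$, or simply $N=q_{m}$ for suitable $m$) making $|1-e^{2\pi i q'_n N\theta}|$ bounded below, say $\ge 1$, simultaneously for $n\le n_0$, while the tail $\sum_{n>n_0}$ stays controlled; letting $n_0\to\infty$ along a good choice of $N$ drives the partial sums, and hence $|\varphi_{N\theta,q',u}(z)|$, to infinity. (Alternatively, one first passes to a further subsequence $q''$ as in Lemma~\ref{LemMu} to make the lower bound on $|1-e^{2\pi i q''_n N\theta}|$ uniform, then compares.) Finally, the constant-sequence case $u_n\equiv 1$ is handled identically, noting that Proposition~\ref{PropRadii}(i) already gives non-extendability of $h_{q'}$ across $\partial\Delta$ without the genericity hypothesis, so the same arguments apply verbatim with $E_1$ removed.
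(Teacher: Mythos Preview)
There are two genuine gaps, one in each half of the argument.

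\textbf{The Fatou-component step.} Your plan is to extract a locally uniform limit $\psi$ of a subsequence $A_{\theta,q',u}^{\circ N_j}$ on a neighbourhood of a boundary point and then ``deduce that $h_{q',u}$ agrees near $z_0$ with a holomorphic function''. But on $\mathbb{C}\times\Delta$ the limit is $S_{-h_{q',u}}\circ R_\mu\circ S_{h_{q',u}}$ for the limit angle $\mu$ of $N_j\theta$, whose first coordinate is $e^{2\pi i\mu}w+\varphi_{\mu,q',u}(z)$ with $\varphi_{\mu,q',u}(z)=e^{2\pi i\mu}h_{q',u}(z)-h_{q',u}(e^{2\pi i\mu}z)$. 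So what extends across $\partial\Delta$ is $\varphi_{\mu,q',u}$, not $h_{q',u}$ itself, and the whole point of the construction is that this \emph{can} happen: for $\mu=\theta$ the function $\varphi_{\theta,q',u}$ is entire while $h_{q',u}$ has radius~$1$. Thus a randomly chosen subsequential limit gives no contradiction. The paper closes this gap with Lemma~\ref{LemMu}: it first produces a specific $\mu$ (and the corresponding $G_\delta$ set $E_1$) for which $\varphi_{\mu,q',u}$ itself has convergence radius exactly~$1$, then steers the subsequence so that $N_j\theta\to\mu$; the extension of $\psi$ to the tube then directly contradicts the radius-$1$ statement for $\varphi_{\mu,q',u}$. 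Note in particular that the relevant $E_1$ comes from Lemma~\ref{LemMu}, not from Proposition~\ref{PropRadii}(iii) applied to $q'$.

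\textbf{The unboundedness step.} Your direct approach is to fix $z$ with $|z|>1$ and, for each $n_0$, choose $N$ so that $|1-e^{2\pi i q'_n N\theta}|\ge 1$ simultaneously for all $n\le n_0$ while the tail stays controlled. Two problems. First, the simultaneous lower bound is not available by any of the devices you mention: the numbers $q'_0\theta,\dots,q'_{n_0}\theta$ are not rationally independent, so equidistribution on the torus fails, and your alternative choice $N=q_m$ in fact makes every $e^{2\pi i q'_n N\theta}$ \emph{close to $1$}, the opposite of what you need. Second, even granting such an $N$, the partial sum $\sum_{n\le n_0}u_{q'_n}(1-e^{2\pi i q'_n N\theta})z^{q'_n}$ need not be large: you are multiplying each term of a divergent series by a complex factor of modulus in $[1,2]$, which can easily introduce cancellation. (Incidentally, the remark that the factors ``do not rescue convergence'' is backwards: for each fixed $N$ they \emph{do} rescue convergence, since $\varphi_{N\theta,q',u}$ is entire.) The paper avoids all of this by arguing contrapositively: a bounded orbit forces $\{\varphi_{N\theta,q',u}\}_N$ to be uniformly bounded on the circle $|z|=|z_0|$, hence on the disc by the maximum principle, hence $\mathbb{C}\times\Delta(0,|z_0|)$ lies in a Fatou component, contradicting the first part of the theorem.
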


\begin{proof}
Let us fix $q'$, and let $\mu \in \mathbb{R}$ and $E_1 \subset l^\infty$ be the $G_\delta$ dense set obtained in Lemma \ref{LemMu}. 
By Proposition~\ref{P:growth-extra}, for every $u\in l^\infty$, $A_{\theta, q', u}$ is a well-define holomorphic map on $\mathbb{C}^2$. 
From now on we assume that $u \in E_1$, or $u$ is the constant sequence $u_n \equiv 1$. 

Note that $A_{\theta,q',u}$ preserves $\mathbb{C} \times \Delta$. Indeed, by Equation~\ref{E:conjugacy} and 
Proposition~\ref{PropRadii}, $A_{\theta,q',u}$ is conjugate to an irrational rotation on $\mathbb{C} \times \Delta$. 
Thus, $\mathbb{C} \times \Delta$ is contained in the Fatou set of $A_{\theta,q',u}$. 
Let $\Omega \subset \mathbb{C}^2$ be a Fatou component of $A_{\theta,q',u}$ which contains $\mathbb{C} \times \Delta$. 
Assume in the contrary that $\Omega \neq \mathbb{C} \times \Delta$. 

We may choose $w_0\in \mathbb{C}$, $z_0 \in \partial \Delta$, and $\epsilon >0$ such that the bidisk 
$\Delta(w_0, \epsilon) \times \Delta(z_0, \epsilon)$ is contained in $\Omega$. It follows from the formula 
\begin{align*}
A_{\theta,q',u}^{\circ N} (w,z) =(e^{2\pi i N\theta}w+ \varphi_{N\theta,q',u}(z), e^{2\pi i N\theta}z), \quad N\geq 0,
\end{align*}
that the iterates of $A_{\theta,q',u}$ are locally equi-continuous on the tube $\mathbb{C} \times \Delta(z_0, \epsilon)$. 
On the other hand, since $\Omega$ is invariant under $A_{\theta,q',u}$, and $A_{\theta,q',u}$ acts as an irrational rotation in 
the $z$-coordinate, $\Omega$ contains $\mathbb{C} \times \{z\in \mathbb{C} \;\colon\; ||z|-1|<\epsilon\}$.
Thus, the iterates of $A_{\theta, q', u}$ are equi-continuous on the tube $T= \mathbb{C} \times \Delta(0, 1+ \epsilon)$. 

Since $A_{\theta,q',u}$ is conjugate to the irrational rotation $R_\theta$ on $\mathbb{C} \times \Delta$, there is an increasing 
sequence of positive integers $m_i$ such that 
\[A_{\theta,q',u}	^{\circ m_i} \to S_{-h_{q',u}} \circ R_\mu \circ S_{h_{q',u}},\]
with the convergence uniform on compact subsets of $\mathbb{C} \times \Delta$. 
Since $\Omega$ is a Fatou set for $A_{\theta,q',u}$, we may extract an increasing subsequence $k_i$ of $m_i$ such that the 
iterates $A_{\theta,q',u}^{\circ k_i}$ converges uniformly on compact subsets of  $T$ to a holomorphic map 
$\psi: T \to \mathbb{C}^2$. 
Thus, we must have 
\[\psi=S_{-h_{q',u}} \circ R_\mu \circ S_{h_{q',u}}\;\textrm{on}\;\mathbb{C} \times \Delta.\]
However, the map $S_{-h_{q',u}} \circ R_\mu \circ S_{h_{q',u}}$ is of the form 
\[(w,z) \mapsto (e^{2\pi i \mu}w+ \varphi_{\mu,q',u}(z), e^{2\pi i \mu}z),\]
which, by our choice of $\mu$ and $u$ and according to Lemma \ref{LemMu}, diverges at some point on 
$\mathbb{C} \times \partial \Delta$. This contradicts $\psi$ being defined on $T$.

We have just proved  that $\mathbb{C} \times \Delta$ is a Fatou component of $A_{\theta,q',u}$. 
As we shall show below, this implies that for any $(w,z) \in \mathbb{C}^2$ with $|z|>1$, the orbit of 
$(w,z)$ by $A_{\theta,q',u}$ is unbounded.

Let us fix $(w_0, z_0) \in \mathbb{C}^2$ with $|z_0| > 1$. 
Assume in the contrary that there is $M>0$ such that  $|A_{\theta,q',u}^{\circ N}(w_0, z_0)| \leq M$, for all $N\geq 0$.
Then $|A_{\theta,q',u}^{\circ (N+n)}(w_0, z_0)| \leq M$, for all $N\ge 0$ and all $n\ge 0$.
Writing $A_{\theta,q',u}^{\circ n} (w_0,z_0)=:(w_n,e^{2\pi i  n \theta}z_0)$, we thus have simultaneously
$\vert w_n\vert \le M$ and $|A_{\theta,q',u}^{\circ N} (w_n, e^{2\pi i n\theta}z_0)| \leq M$, for all $n \geq 0$ and all $N\geq 0$.
Then, since 
\begin{align*}
A_{\theta,q',u}^{\circ (N+n)}(w_0, z_0) &=A_{\theta,q',u}^{\circ N} (w_n,e^{2\pi i n\theta}z_0) \\
&=(e^{2\pi i N\theta}w_n+ \varphi_{N\theta,q',u}(e^{2\pi i n\theta}z_0), e^{2\pi i (N+n)\theta}z_0),
\end{align*}
we get that $\vert \varphi_{N\theta,q',u}\vert \le 2M$ on $\{e^{2\pi i n \theta} z_0 \;\colon\;  n\geq 0\}$, for any $N\ge 0$. 
Thus, the family of maps $\varphi_{N\theta,q',u}$, for $N\geq 0$, is uniformly bounded on the circle $|z|=|z_0|$ and, 
by the maximum principle, on the disk $\Delta(0, |z_0|)$. 

As $A_{\theta,q',u}^{\circ N} (w,z) =(e^{2\pi i N\theta}w+ \varphi_{N\theta,q',u}(z), e^{2\pi i N\theta}z)$,
by the above paragraph, the iterates of $A_{\theta,q',u}$ are locally uniformly bounded on 
$\mathbb{C} \times \Delta(0, |z_0|)$. Therefore, $\mathbb{C} \times \Delta(0, |z_0|)$ must be a Fatou component of 
$A_{\theta,q',u}$, which strictly contains the Siegel tube $\mathbb{C} \times \Delta$. This contradicts the earlier statement 
that $\mathbb{C} \times \Delta$ itself is a Fatou component of $A_{\theta,q',u}$.
\end{proof}

We now aim to show that for a certain choice of the subsequence $q'$ and for a generic  $u\in l^\infty$ the Julia set
of $A_{\theta,q',u}$ coincides with the complement of the Siegel  tube $\mathbb{C} \times \Delta$. 
More precisely, we prove the following theorem. 

\begin{theorem}\label{T:Julia}
Assume that $\theta$ is an irrational number with best rational approximants $(p_n/q_n)_{n\geq 0}$ satisfying 
$\lim_{n\to \infty} q_n^{-1}\log q_{n+1}=\infty$. 
Let $S$ be  a countable dense subset  of $\mathbb{C}\times \{\vert z\vert >1\}$. 
Then, there exist a subsequence $q'$ of $(q_n)_{n\geq0}$ and a dense $G_\delta$-subset $E_2 \subset l^\infty$ 
such that for any $u \in E_2$, $A_{\mu,q',u}(w,z)$ is an automorphism of $\mathbb{C}^{2}$ satisfying the following properties:
\begin{itemize}
\item[i)]  $A_{\theta,q',u}$  is analytically conjugate to the rotation $R_\theta$ on $\mathbb{C}\times \Delta$;
\item[ii)]  the orbit of any $(w,z) \in \mathbb{C}\times \{\vert z\vert >1\}$ by $A_{\theta,q',u}$   is unbounded and recurrent;
\item[iii)] the sequence of derivatives $(\Vert \mathrm{D} A_{\theta,q',u}^{\circ N} (w,z)\Vert)_{N\geq 0}$ is unbounded, 
for any $(w,z)\in S$. 
\end{itemize}
\end{theorem}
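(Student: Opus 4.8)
The plan is to reduce the whole statement to one explicit computation of the iterates of $A:=A_{\theta,q',u}$ and to one Baire category argument.

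\textbf{Explicit iterates and the conjugacy (part (i)).} Since the second coordinate of $A$ is $z\mapsto e^{2\pi i\theta}z$, one has $A^{\circ N}(w,z)=(w_N,e^{2\pi iN\theta}z)$ with $w_{j+1}=e^{2\pi i\theta}w_j+\varphi_{\theta,q',u}(e^{2\pi ij\theta}z)$. Summing the geometric factors and using that the $(q'_n+1)$-st Taylor coefficient of $\varphi_{\theta,q',u}$ carries precisely the factor $1-e^{2\pi iq'_n\theta}$, which is, up to a unit, the small divisor $e^{2\pi i\theta}-e^{2\pi i(q'_n+1)\theta}$, the sum telescopes and gives
\[
A^{\circ N}(w,z)=\bigl(e^{2\pi iN\theta}\,(w-F_N(z)),\,e^{2\pi iN\theta}z\bigr),\qquad
F_N(z):=\sum_{n\ge0}u_{q'_n}\bigl(e^{2\pi iNq'_n\theta}-1\bigr)z^{q'_n+1}.
\]
The hypothesis $q_n^{-1}\log q_{n+1}\to\infty$ gives, for any subsequence $q'=(q_{k_n})_n$ of $(q_n)$ and any $u\in l^\infty$, that $|u_{q'_n}(1-e^{2\pi iq'_n\theta})|^{1/q'_n}\lesssim q_{k_n+1}^{-1/q'_n}\to0$; hence $\varphi_{\theta,q',u}$ is entire, $A$ is a genuine automorphism of $\mathbb C^2$, and $F_N$ converges on all of $\mathbb C$. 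The same telescoping, run formally, exhibits the shear $H(w,z):=(w+\Phi(z),z)$ with $\Phi(z):=\sum_n u_{q'_n}z^{q'_n+1}$ as a solution of the cohomological equation $\varphi_{\theta,q',u}(z)=e^{2\pi i\theta}\Phi(z)-\Phi(e^{2\pi i\theta}z)$; since $(u_n)\in l^\infty$, $\Phi$ is holomorphic on $\Delta$, so $H$ biholomorphically conjugates $A$ to $R_\theta$ on $\mathbb C\times\Delta$. This is (i), valid for every $u\in l^\infty$.

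\textbf{Recurrence.} Next I would show, for every $u\in l^\infty$, that $A^{\circ q_m}\to\mathrm{id}$ locally uniformly on $\mathbb C\times\{|z|>1\}$. Writing $\lVert t\rVert$ for the distance from $t$ to $\mathbb Z$, one has $\lVert q_mq'_n\theta\rVert\lesssim\min(q_m/q_{k_n+1},\,q_{k_n}/q_{m+1})$, so at the special time $q_m$ every frequency carries a small divisor $e^{2\pi iq_mq'_n\theta}-1$; a crude bound on $F_{q_m}(z)$ using $q_m^{-1}\log q_{m+1}\to\infty$ then shows $F_{q_m}\to0$ locally uniformly on $\{|z|>1\}$, and since also $e^{2\pi iq_m\theta}\to1$ we get $A^{\circ q_m}(w,z)\to(w,z)$. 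As the $z$-coordinate stays on a fixed circle of radius $>1$, this is genuine recurrence for every orbit in $\mathbb C\times\{|z|>1\}$, which is half of (ii).

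\textbf{Unboundedness and unbounded derivatives (rest of (ii), and (iii)).} Genericity is needed only here. I would take
\[
E_2:=\Bigl\{u\in l^\infty:\ \limsup_{n\to\infty}|u_{q'_n}|^{1/q'_n}=1\Bigr\}
\]
(intersected, if necessary, with finitely many similar generic conditions); this is a dense $G_\delta$, since it equals $\bigcap_{j,M}\bigcup_{n\ge M}\{\,|u_{q'_n}|^{1/q'_n}>1-1/j\,\}$, a countable intersection of open sets each of which is dense because, given $v\in l^\infty$ and $\varepsilon>0$, the sequence $u$ that agrees with $v$ off the indices $q'_n$ and has $u_{q'_n}$ of modulus $\max(|v_{q'_n}|,\varepsilon/2)$ and the same argument is an $l^\infty$-perturbation of size $\le\varepsilon/2$ with $|u_{q'_n}|^{1/q'_n}\to1$. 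Fix $u\in E_2$ and a subsequence $(n_i)$ with $|u_{q'_{n_i}}|^{1/q'_{n_i}}\to1$. For each such $n$ pick $N_n$ to be the integer nearest $\tfrac12\lVert q'_n\theta\rVert^{-1}$, so that $\lVert N_nq'_n\theta\rVert\ge\tfrac14$ (hence $|e^{2\pi iN_nq'_n\theta}-1|\ge\sqrt2$) while $N_n\asymp q_{k_n+1}$; then in both $F_{N_n}$ and $F_{N_n}'$ the terms of index $n'>n$ are $\lesssim q'_{n'}\,|z|^{q'_{n'}+1}N_n/q_{k_{n'}+1}$ — negligible by the growth of $(q_m)$ — and those of index $n'<n$ total $O\bigl(q'_{n-1}|z|^{q'_{n-1}+1}\bigr)$ — negligible against the $n$-th term since the gaps $q'_n-q'_{n-1}\to\infty$. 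Hence for $|z|>1$ and $n$ large the $n$-th term dominates, so
\[
|F_{N_n}(z)|\gtrsim|u_{q'_n}|\,|z|^{q'_n+1},\qquad
|F_{N_n}'(z)|\gtrsim q'_n\,|u_{q'_n}|\,|z|^{q'_n},
\]
and along $(n_i)$ both right-hand sides exceed $\bigl(|u_{q'_n}|^{1/q'_n}|z|\bigr)^{q'_n}\to\infty$. Reading this through $A^{\circ N_n}(w,z)=(e^{2\pi iN_n\theta}(w-F_{N_n}(z)),\,e^{2\pi iN_n\theta}z)$ shows every orbit in $\mathbb C\times\{|z|>1\}$ is unbounded, finishing (ii); and since $\mathrm DA^{\circ N}$ is upper triangular with off-diagonal entry $-e^{2\pi iN\theta}F_N'(z)$, we have $\lVert\mathrm DA^{\circ N}(w,z)\rVert\ge|F_N'(z)|$, which gives (iii) for every $(w,z)$ with $|z|>1$, in particular on $S$.

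\textbf{The main difficulty.} The delicate point is the estimate in the last step: one must choose the times $N_n$ so that the $n$-th small divisor $e^{2\pi iN_nq'_n\theta}-1$ is bounded below while all higher frequencies still see a small divisor at time $N_n$, and then win the competition between the exponentially growing factor $|z|^{q'_n}$ and the merely bounded — possibly decaying — factor $|u_{q'_n}|$. Making this quantitative is exactly what passing to a sufficiently sparse subsequence $q'$ of $(q_n)$, together with the full strength of $\lim_n q_n^{-1}\log q_{n+1}=\infty$, is for; the remaining inequalities are routine bookkeeping.
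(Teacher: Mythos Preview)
Your argument is correct and, in several respects, both simpler and stronger than the paper's; the overall architecture, however, is genuinely different.

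\textbf{Comparison.} The paper splits the work among three auxiliary results. For the recurrence half of (ii) it builds a \emph{special} subsequence $q'$ together with return times $N_p$ (Lemma~\ref{LemRec}); you instead observe that the full sequence of times $N=q_m$ already forces $F_{q_m}\to 0$ locally uniformly via the two-sided small-divisor bound $\lVert q_m q'_n\theta\rVert\le\min(q_m/q_{k_n+1},q'_n/q_{m+1})$, and this works for \emph{every} subsequence $q'$ and every $u\in l^\infty$. For the unboundedness half of (ii) the paper argues indirectly: it first shows (Theorem~\ref{T:Siegel-component}, using Lemma~\ref{LemMu}) that $\mathbb{C}\times\Delta$ is the full Fatou component, by producing a rotation angle $\mu$ for which $\varphi_{\mu,q',u}$ has convergence radius exactly $1$ and then taking a limit of iterates; a bounded orbit outside would enlarge the Fatou component. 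You bypass this entirely by choosing explicit times $N_n\asymp\tfrac12\lVert q'_n\theta\rVert^{-1}$ so that the $n$-th frequency is ``switched on'' while higher frequencies still carry a small divisor, and then isolating the dominant term. For (iii) the paper invokes Banach--Steinhaus on the family of functionals $u\mapsto\varphi'_{N\theta,q',u}(z)$ (Lemma~\ref{LemJul}), obtaining a separate dense $G_\delta$ set $E(z)$ for each $z$ and therefore needing the countable set $S$ to intersect; your dominant-term estimate gives $\sup_N\lvert F'_N(z)\rvert=\infty$ for \emph{every} $|z|>1$ once $u$ lies in the single explicit set $E_2=\{\limsup|u_{q'_n}|^{1/q'_n}=1\}$.

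\textbf{What each approach buys.} Your route is more elementary (no Banach--Steinhaus, no Fatou-component detour), yields a concrete description of $E_2$, and actually proves more: (iii) holds for all $|z|>1$ rather than only on $S$, and none of (i)--(iii) requires a special choice of $q'$ --- your closing remark that ``a sufficiently sparse subsequence $q'$'' is needed is overly cautious, since the growth hypothesis $q_n^{-1}\log q_{n+1}\to\infty$ already forces $q'_{n-1}/q'_n\to 0$ for any subsequence. The paper's route, on the other hand, delivers the extra structural statement that $\mathbb{C}\times\Delta$ is precisely the Fatou component of $A_{\theta,q',u}$, which your argument recovers only a posteriori from the unbounded derivatives.
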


The proof  is based on the following two lemmas.

\begin{lemma}\label{LemRec}
Assume that $\theta$ is an irrational number whose best rational approximants $(p_n/q_n)_{n\geq 0}$ satisfies 
$q_n^{-1} \log q_{n+1} \to \infty$, as $n\to \infty$. 
For any sequence of positive numbers $(\varepsilon_p)_{p\geq 1}$ converging to zero, 
there exist a subsequence $(q'_n)_{n\geq 0}$ of $(q_n)_{n\geq 0}$ and an increasing  sequence of integers $(N_p)_{p \geq 1}$ 
such that for every $p \ge 1$, we have 
\[\vert 1-e^{2 \pi i N_{p}\theta} \vert + \sum_{n\ge 0} \vert 1-e^{2\pi i q'_{n} N_{p}\theta} \vert  p^{q'_n} \le \varepsilon_p.\]
\end{lemma}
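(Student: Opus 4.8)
The plan is to construct the subsequence $q'$ and the integers $N_p$ simultaneously by an interleaved induction, using the hypothesis $q_n^{-1}\log q_{n+1}\to\infty$ to make the tail of the series negligible and a pigeonhole/simultaneous-approximation argument to kill the finitely many leading terms. First I would fix a rapidly decreasing auxiliary sequence, say $\delta_p>0$ with $\delta_p\le \varepsilon_p/2$ and $\sum_p\delta_p$ small, and arrange that at stage $p$ we have already chosen $q'_0<q'_1<\dots<q'_{p-1}$ and $N_1<\dots<N_{p-1}$. The key observation is the elementary bound $\vert 1-e^{2\pi i m\beta}\vert\le 2\pi\Vert m\beta\Vert$ (distance to nearest integer), together with $\vert 1-e^{2\pi i m\beta}\vert\le 2$ always; so to control $\sum_{n\ge0}\vert 1-e^{2\pi i q'_n N_p\theta}\vert p^{q'_n}$ I split it into the "old" indices $n\le p-1$, which I handle by choosing $N_p$ well, and the "new" indices $n\ge p$, which I handle by choosing $q'_n$ (for $n\ge p$) large enough relative to the finitely many $N_1,\dots,N_p$ already fixed.

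The second step is the choice of $N_p$. Given the already-selected $q'_0,\dots,q'_{p-1}$, I want an integer $N=N_p>N_{p-1}$ with $\Vert N\theta\Vert$ small and each $\Vert q'_n N\theta\Vert$ small for $n=0,\dots,p-1$; since the $q'_n$ are fixed, it suffices to make $\Vert N\theta\Vert\le \eta_p$ for a sufficiently small $\eta_p$ (then $\Vert q'_n N\theta\Vert\le q'_{p-1}\eta_p$). Such $N$ exist because $\theta$ is irrational, so $\{N\theta \bmod 1\}$ is dense; concretely one may take $N$ among the continued-fraction denominators $q_m$, using $\Vert q_m\theta\Vert\le 1/q_{m+1}\to0$ from Equation~\eqref{E:fine-approximation}. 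Choosing $\eta_p$ small enough guarantees $\vert 1-e^{2\pi i N_p\theta}\vert+\sum_{n=0}^{p-1}\vert 1-e^{2\pi i q'_n N_p\theta}\vert p^{q'_n}\le \delta_p$, since this is a finite sum of terms each controlled by $\eta_p$ times a fixed constant depending only on $p$ and on $q'_0,\dots,q'_{p-1}$.

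The third step is the choice of the new denominator $q'_p$ (done after $N_p$, so that $N_1,\dots,N_p$ are all available). Here I use the growth hypothesis: pick $q'_p=q_{m}$ for some large $m$ in the original sequence with $q_m>q'_{p-1}$ and, crucially, $\log q_{m+1}\ge C\, q_m$ for $C$ as large as we like (possible since $q_m^{-1}\log q_{m+1}\to\infty$). Then by Equation~\eqref{E:fine-approximation}, $\vert 1-e^{2\pi i q'_p N\theta}\vert\le 2\pi N/q_{m+1}$ for each of the finitely many values $N=N_1,\dots,N_p$, so choosing $m$ large makes the contribution of the single index $n=p$ to every one of the (finitely many, already fixed) sums $\sum_n\vert1-e^{2\pi i q'_nN_j\theta}\vert j^{q'_n}$ at most, say, $\delta_j 2^{-p}$ — and simultaneously one must ensure the future terms $n>p$ will also be summable, which is why at each later stage we keep halving the error budget. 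Assembling: for fixed $p$, the full series equals the "old part" ($\le\delta_p$ by the choice of $N_p$) plus $\sum_{n\ge p}\vert1-e^{2\pi i q'_nN_p\theta}\vert p^{q'_n}$, and by construction the $n$-th of these ($n\ge p$) is $\le \delta_p 2^{-(n-p)}$ because $q'_n$ was chosen after $N_p$ with $q_{(\cdot)+1}$ growing fast enough against $N_p$ and against the factor $p^{q'_n}$ (here one uses $\log q_{m+1}\gg q_m$ to beat $p^{q'_n}=e^{(\log p) q'_n}$). Hence the total is $\le 2\delta_p\le\varepsilon_p$.

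The main obstacle is the competition between the exponentially large factor $p^{q'_n}$ and the smallness of $\vert1-e^{2\pi i q'_nN_p\theta}\vert\approx 2\pi N_p/q_{q'_n\text{-index}+1}$: a crude bound $\vert1-e^{2\pi i q'_nN_p\theta}\vert\le 2$ is useless, so one genuinely needs $q'_n$ to be a continued-fraction denominator $q_m$ with $q_{m+1}$ so large that $N_p\cdot p^{q_m}/q_{m+1}$ is tiny; this is exactly where $q_m^{-1}\log q_{m+1}\to\infty$ is indispensable and is the only place the arithmetic hypothesis enters. The bookkeeping — choosing, at stage $p$, the index $m$ defining $q'_p$ large enough to simultaneously beat all of $N_1,\dots,N_p$ and to leave room for the geometric tail $2^{-(n-p)}\delta_p$ in every one of the future sums — is routine once the order of quantifiers (first $q'_0,\dots,q'_{p-1}$, then $N_p$, then $q'_p$) is fixed as above.
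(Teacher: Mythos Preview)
Your argument is correct and follows essentially the same interleaved induction as the paper: first choose $N_p$ (using density of $\{N\theta\}$) to make the finite head $\sum_{n<p}$ small, then choose $q'_p$ (using $q_m^{-1}\log q_{m+1}\to\infty$) to make the tail small. The only cosmetic difference is in the tail bookkeeping: the paper imposes a single condition on $q'_p$ at stage $p$ (namely $\sum_{q_n\ge q'_p} p^{q_n}/q_{n+1}\le \varepsilon_p/(4\pi N_p)$), which automatically bounds the contribution of \emph{all} future $q'_n$ to the $p$-th sum regardless of how they are chosen, whereas you impose at stage $n$ a separate smallness condition for each of the $n$ already-fixed sums; both work, but the paper's version avoids the accumulating list of constraints.
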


\begin{proof} 
Let us set $N_0=1$ and $q'_0=q_0$. 
We inductively identify the pair of integers $q_p'$ and $N_p$ so that the following properties are satisfied for all $p\ge 1$:
\begin{itemize}
\item[(i)] $N_p > N_{p-1}$, 
\item[(ii)] 
$\vert 1-e^{2i\pi  N_{p}\theta} \vert + \sum_{n=0}^{p-1} \vert 1-e^{2i\pi q'_{n} N_{p}\theta} \vert  p^{q'_n} \le \varepsilon_p/2$,
\item[(iii)] $q'_p \in \{q_n \;\colon\;  n\geq 0, q_n >q'_{p-1}, q_n\geq 2 N_p\}$, 
\item[(iv)]  $\sum_{n ; q_n\ge q'_p} \frac{p^{q_n}}{q_{n+1}} \le \varepsilon_p/(4\pi N_p)$.
\end{itemize}

Assume that $q'_0,\cdots, q'_{p-1}$ and $N_0, \cdots, N_{p-1}$ are already constructed. 
First we choose an integer $N_p$ so that (i) holds, and $e^{2 \pi i N_{p}\theta}$ is close enough to $1$ in order to satisfy 
property (ii). 
Then we choose $q'_p$ according to property (iii), while large enough to guarantee property (iv). 
The latter choice is possible due to our assumption on the growth of $(q_n)_{n\geq 0}$, that is, the radius of convergence of the 
series $\sum_{n\geq 0} z^{q_n}/ q_{n+1}$ is infinity.

Let us denote by $p'_n$ the integer corresponding to $q'_n$ in the best rational approximation of $\theta$. 
We have $\vert 2\pi q'_n N_p \theta -2\pi N_p p'_n\vert \le 2\pi N_p/ q'_{n+1}$. 
On the other hand, by property (iii) in the above list, for all $n\geq p$, we have $N_p/ q'_{n+1} \leq N_p/ q'_p \leq 1/2$. 
These imply that for all $n\geq p$, we must have 
\begin{eqnarray}\label {Est1}
\vert 1-e^{2i\pi q'_{n} N_{p}\theta} \vert \leq 2\pi N_p/ q'_{n+1}.
\end{eqnarray}

Also note that since $(q'_n)_{n\geq 0}$ is a subsequence of $(q_n)_{n\geq 0}$, by property (iv) in the above list we have  
\[\sum_{n\ge p} \frac{p^{q'_n}}{q'_{n+1}} \le \sum_{q_n\ge q'_{p}} \frac{p^{q_n}}{q_{n+1}} \leq \frac{\varepsilon_p}{4\pi N_p}.\]
 
We may now check that the conclusion of the lemma is fulfilled. Let $p\ge 1$. Using the estimate in property (ii), and 
the above estimates, we get 
\begin{multline*}
\Big |1-e^{2\pi i N_{p}\theta} |  + \sum_{n= 0}^{p-1} \Big | 1-e^{2\pi i q'_{n} N_{p}\theta} \Big | p^{q'_n} 
 + \sum_{n\ge p} \Big | 1-e^{2 \pi i q'_{n} N_{p}\theta} \Big | p^{q'_n} \\
\le \varepsilon_p/2 + 2\pi N_p\sum_{n\ge p} \frac{p^{q'_n}}{q'_{n+1}} 
\leq \varepsilon_p/2 + \varepsilon_p/2. \qedhere
\end{multline*}
\end{proof}

Assume that $\theta$ is an irrational number which satisfies the hypothesis of Proposition~\ref{P:growth-extra}, so that for  
any subsequence $q'$ of $q$ and any $u \in l^\infty$, $\varphi_{\theta, q', u}$ is an entire function. 
It follows from the formula in Equation~\eqref{E:composition} that for any integer $N\in \mathbb{N}$, $\varphi_{N\theta,q',u}$ is an entire 
function. 

\begin{lemma}\label{LemJul}
Assume that $\theta$ is an irrational number whose best rational approximants $(p_n/q_n)_{n\geq 0}$ satisfy 
$q_n^{-1} \log q_{n+1} \to \infty$, as $n\to \infty$. 
For any subsequence $q'$ of $(q_n)_{n\geq 0}$ and any $z\in \mathbb{C}$ with $|z|>1$, there exists a dense $G_\delta$-subset $E(z)$ 
of $l^\infty$ such that for every $u\in E(z)$, 
\[\sup_N \vert \varphi'_{N\theta,q',u}(z)\vert =+\infty.\]
\end{lemma}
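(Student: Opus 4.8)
The plan is to prove the statement by a Banach–Steinhaus (uniform boundedness) argument applied to the linear functionals $u \mapsto \varphi'_{N\theta,q',u}(z)$ on $l^\infty$, combined with the construction of Lemma~\ref{LemRec}. Fix $z$ with $|z|>1$. For each $N\in\mathbb{N}$ the map $L_{N,z}\colon l^\infty\to\mathbb{C}$ defined by $L_{N,z}(u)=\varphi'_{N\theta,q',u}(z)$ is a bounded linear functional, since differentiating the series
\[
\varphi_{N\theta,q',u}(z)=ze^{2\pi i N\theta}\sum_{n\ge 0}u_{q'_n}\bigl(1-e^{2\pi i q'_n N\theta}\bigr)z^{q'_n}
\]
term by term gives
\[
\varphi'_{N\theta,q',u}(z)=e^{2\pi i N\theta}\sum_{n\ge 0}(q'_n+1)\,u_{q'_n}\bigl(1-e^{2\pi i q'_n N\theta}\bigr)z^{q'_n},
\]
an absolutely convergent series whose value is bounded by $\|u\|_\infty\sum_n (q'_n+1)\,|1-e^{2\pi i q'_n N\theta}|\,|z|^{q'_n}$ (finite because $\varphi_{N\theta,q',u}$ is entire, by the remark following Lemma~\ref{LemRec}). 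So each $L_{N,z}$ is a well-defined element of $(l^\infty)^*$.

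The key step is to show $\sup_N \|L_{N,z}\|=+\infty$; once this is established, the Banach–Steinhaus theorem \cite[page 98]{Rud87} yields a dense $G_\delta$-subset $E(z)\subset l^\infty$ with $\sup_N|L_{N,z}(u)|=+\infty$ for all $u\in E(z)$, which is exactly the conclusion. To estimate $\|L_{N,z}\|$ from below I would not use the full norm but rather evaluate $L_{N,z}$ on a judiciously chosen unimodular sequence. Concretely, given $N$, choose $u^{(N)}\in l^\infty$ with $\|u^{(N)}\|_\infty=1$ so that each term $u^{(N)}_{q'_n}(1-e^{2\pi i q'_n N\theta})z^{q'_n}$ has the same argument as $e^{-2\pi i N\theta}$; then
\[
|L_{N,z}(u^{(N)})|=\sum_{n\ge 0}(q'_n+1)\,\bigl|1-e^{2\pi i q'_n N\theta}\bigr|\,|z|^{q'_n}.
\]
It therefore suffices to find a sequence of integers $N$ along which this quantity tends to $\infty$. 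Since $|z|>1$, even a single term with index $n$ such that $q'_n$ is not too large relative to $N$ — say $q'_{n}\le q'_{n(N)}$ for a suitable $n(N)\to\infty$ — while $|1-e^{2\pi i q'_n N\theta}|$ stays bounded below, forces the sum to grow at least like $(q'_n+1)\,c\,|z|^{q'_n}\to\infty$. One way to arrange this: fix any index $m$ and run over $N$ in an arithmetic-type progression (or use simultaneous Diophantine/pigeonhole considerations on the torus) so that $q'_m N\theta$ avoids a neighbourhood of $\mathbb{Z}$ for infinitely many $N$; this keeps $|1-e^{2\pi i q'_m N\theta}|\ge c_m>0$, giving $\|L_{N,z}\|\ge (q'_m+1)\,c_m\,|z|^{q'_m}$ for those $N$, and letting $m\to\infty$ (choosing $N$'s accordingly) makes the lower bound diverge.

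The main obstacle is the competition in the series defining $\varphi'_{N\theta,q',u}(z)$: the factors $|1-e^{2\pi i q'_n N\theta}|$ must be small for the \emph{high-frequency} terms to keep $\varphi_{N\theta,q',u}$ entire (as guaranteed by the growth hypothesis on $q_n$ and the approximation estimate \eqref{E:fine-approximation}), yet we need some term with a \emph{large} $q'_n$ and a non-negligible factor $|1-e^{2\pi i q'_n N\theta}|$ and the amplification $|z|^{q'_n}$ to make the derivative blow up. Reconciling these requires choosing $N$ so that $N\theta$ is close to a rational with a controlled, intermediate denominator, so that $q'_n N\theta$ is near an integer for $n$ beyond some threshold but bounded away from integers at one well-chosen intermediate index. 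This is precisely the type of control furnished by the inductive construction underlying Lemma~\ref{LemRec}, and I expect the cleanest route is to re-use that construction: pick the $q'$ and the integers $N_p$ as there, note that the smallness bound there controls the tail of the series for $\varphi_{N_p\theta,q',u}$ while leaving one "free" index at which $|1-e^{2\pi i q'_j N_p\theta}|$ can be taken $\gtrsim 1$, and read off $\|L_{N_p,z}\|\ge (q'_j+1)\cdot|z|^{q'_j}\to\infty$ as $p\to\infty$.
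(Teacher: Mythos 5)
Your core plan is right and, where you commit to it, it is actually a cleaner route than the paper's.

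Like the paper, you cast $u\mapsto\varphi'_{N\theta,q',u}(z)$ as a bounded linear functional $\Phi_{N,z}$ on $l^\infty$, compute its operator norm as
\[
\Vert\Phi_{N,z}\Vert=\sum_{n\ge0}(1+q'_n)\,\bigl|1-e^{2\pi i q'_n N\theta}\bigr|\,|z|^{q'_n},
\]
and reduce the lemma, via Banach--Steinhaus, to showing $\sup_N\Vert\Phi_{N,z}\Vert=+\infty$. The divergence point is where you and the paper part ways. The paper argues indirectly: if $\sup_N\Vert\Phi_{N,z}\Vert<\infty$, then by the maximum principle the family $\{\varphi_{N\theta,q'}\}$ (constant sequence $u\equiv1$) is uniformly bounded on a disk of radius $>1$, which would make the orbit of a point with $|z_0|>1$ bounded, contradicting Theorem~\ref{T:Siegel-component} --- itself built on Lemma~\ref{LemMu}. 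Your direct argument --- pick any index $m$, use irrationality of $q'_m\theta$ to find infinitely many $N$ with $|1-e^{2\pi i q'_m N\theta}|\ge\sqrt2$, so that $\Vert\Phi_{N,z}\Vert\ge\sqrt2\,(1+q'_m)|z|^{q'_m}$, and let $m\to\infty$ --- is correct, shorter, works for an arbitrary subsequence $q'$, and needs neither Theorem~\ref{T:Siegel-component} nor Lemma~\ref{LemMu}. It uses the growth hypothesis on $\theta$ only to know that $\varphi_{N\theta,q',u}$ is entire, so that the operator norm is finite for each fixed $N$.

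Two corrections to your write-up. First, the ``competition'' paragraph is a red herring: the factors $|1-e^{2\pi i q'_n N\theta}|$ need not be chosen small for large $n$; they are automatically $\lesssim N/q'_{n+1}$ by \eqref{E:fine-approximation}, and the super-exponential growth of $q_n$ makes the series converge for every $N$ regardless of any tuning. There is no tension to reconcile --- for the \emph{lower} bound on the operator norm a single intermediate index suffices, and the tail cannot hurt a sum of nonnegative terms. Second, your closing suggestion to re-use the inductive construction from Lemma~\ref{LemRec} would not prove the statement as formulated: that lemma produces one specific subsequence $q'$, whereas Lemma~\ref{LemJul} is asserted for every subsequence $q'$. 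Stay with your direct Diophantine argument and the proof is complete.
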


\begin{proof} 
Let us fix an arbitrary subsequence $q'$ and a point $z\in \mathbb{C}$ with $|z|>1$. 
Recall that 
\[\varphi_{N\theta,q',u} (z)=ze^{2\pi iN\theta}\sum_{n=0}^\infty u_{q'_{n}}(1-e^{2\pi i q'_n N\theta}) z^{q'_n}.\]
For each $N\in \mathbb{N}$, we consider the bounded linear operator $\Phi_{N,z}: l^\infty \to \mathbb{C}$, defined as 
\[\Phi_{N,z}(u):= \varphi'_{N\theta,q',u}(z)
=e^{2\pi iN\theta}\sum_{n=0}^\infty u_{q'_{n}}(1+q'_n)(1-e^{2\pi i q'_n N\theta}) z^{q'_n}.\]
The operator norm of $\Phi_{N,z}$ satisfies 
\[\Vert \Phi_{N,z}\Vert = \sum_{n=0}^\infty (1+q'_n) \vert 1-e^{2\pi i q'_n N\theta}\vert \vert z\vert^{q'_n}.\]
Recall that $\varphi_{N\theta,q'}$ is the entire function corresponding to the constant sequence $u_n=1$, and observe that
\begin{eqnarray}\label{Obs}
\vert \varphi'_{N\theta,q'}(z) \vert \le \Vert \Phi_{N,z}\Vert=\Vert \Phi_{N,\vert z\vert}\Vert.
\end{eqnarray}

Let us consider the quantity $K_z:= \sup_N \Vert \Phi_{N,z}\Vert$. 
We claim that for every $z\in \mathbb{C}$ with $|z| > 1$, $K_z=+\infty$. 
Otherwise, by Equation~\eqref{Obs} and the maximum modulus principle, we must have 
$\vert \varphi'_{N\theta,q'}(z')\vert \le K_0$, for every integer $N$ and every $z' \in \mathbb{C}$ with $\vert z' \vert \le \vert z\vert$. 
This implies that the family of maps $\{\varphi_{N\theta,q'} ; N\geq 0\}$ is uniformly bounded on the disc $\Delta(0,(1+|z|)/2)$. 
Then, since $A_{\theta,q'}^{\circ N} (w,z)= (e^{2\pi i N\theta} w+ \varphi_{N\theta,q'}(z), e^{2\pi i N\theta}z)$,
the iterates $(A_{\theta,q'}^{\circ N})_{N\geq 0}$ must be uniformly bounded on the tube $\mathbb{C}\times \Delta(0,(1+|z|)/2)$.
This contradicts the latter part of Theorem~\ref{T:Siegel-component}.

By the above paragraph, for $z\in \mathbb{C}$ with $|z|>1$ we have $\sup_N \Vert \Phi_{N,z}\Vert= +\infty$. 
We may employ the Banach-Steinhaus theorem \cite[page 98]{Rud87} to the family of linear operators $\{\Phi_{N,z}\}_{N\geq 1}$. 
That gives us a a dense $G_\delta$-subset $E(z)$ of $l^\infty$, such that for every $u\in E(z)$, 
$\sup_N \vert \varphi'_{N\theta,q',u}(z)\vert=\sup_N \vert \Phi_{N,z}(u)\vert =+\infty$.
\end{proof}

\begin{proof}[Proof of Theorem \ref{T:Julia}]
Let $(\varepsilon_p)_{p\geq 1}$ be a sequence of positive numbers converging to zero. 
With Lemma~\ref{LemRec}, we generate a subsequence $q'$ of $(q_n)_{n\geq 0}$ and an increasing sequence of integers 
$(N_p)_{p\geq 1}$. 
We also employ Theorem \ref{T:Siegel-component} with $q'$ to obtain the dense $G_\delta$ subset of $l^\infty$ denoted by $E_1$. 
Similarly, for $q'$ and each $z\in S$, we use Lemma \ref{LemJul} to obtain the dense $G_\delta$ set $E(z)$ in $l^\infty$.  
By Baire's theorem, $E_2:= \cap_{z\in S} E(z)\cap E_1$ is a dense $G_\delta$-subset of $l^\infty$.

Fix an arbitrary $u\in E_2$. 
According to Theorem \ref{T:Siegel-component}, the map $A_{\theta,q',u}$ is analytically conjugate to the rotation 
$R_\theta$ on $\mathbb{C}\times \Delta$. Moreover, the orbit of  any $(w,z) \in \mathbb{C}^2$ with $|z|>1$ by the map 
$A_{\theta,q',u}$ is unbounded.

Let $(N_p)_{p\geq 1}$ be the sequence of integers given by Lemma \ref{LemRec}.
Recall that
\begin{align*}
A_{\theta,q',u}^{\circ N_p}(w,z) =(e^{2\pi i N_p\theta}w+ \varphi_{N_p\theta,q',u}(z), e^{2\pi i N_p\theta}z),
\end{align*}
where
\[\varphi_{N_p\theta,q',u} (z)=ze^{2\pi iN_p\theta}\sum_{n=0}^\infty u_{q'_{n}}(1-e^{2\pi i q'_n N_p\theta}) z^{q'_n}.\]
Fix an arbitrary $(w,z)\in\mathbb{C}^2$. For integers $p \geq |z|$, we have  
\begin{align*}
\Big \Vert A_{\theta,q',u}^{\circ N_p}(w,z) &  -(w,z) \Big \Vert \\
& \leq \Vert (w,z)\Vert \cdot \Big\vert 1- e^{2\pi i N_p\theta}\Big \vert 
+ |z| \; \Vert u\Vert_\infty  \sum_{n\ge 0} \vert 1-e^{2\pi i q'_{n} N_{p}\theta} \vert \; |z|^{q'_n}\\
& \leq \Vert (w,z)\Vert \cdot \Big\vert 1- e^{2\pi i N_p\theta}\Big \vert 
+ |z| \Vert u\Vert_\infty  \sum_{n\ge 0} \vert 1-e^{2\pi i q'_{n} N_{p}\theta} \vert \;  p^{q'_n}\\
& \leq \Big (\Vert (w,z)\Vert+ |z| \Vert u\Vert_\infty\Big) \Big(\vert 1- e^{2\pi i N_p\theta}\vert  
+  \sum_{n\ge 0} \vert 1-e^{2\pi i q'_{n} N_{p}\theta} \vert  p^{q'_n}\Big) \\
& \leq  \Big (\Vert (w,z)\Vert+ |z| \Vert u\Vert_\infty\Big) \varepsilon_p.
\end{align*}
In the last line of the above equation we have used the estimate in Lemma \ref{LemRec}. 
The above bound shows that the orbit of $(w,z)$ by $A_{\theta,q',u}$ is recurent.

Evidently, $\Vert D A_{\theta,q',u}^N (w,z)\Vert \ge |\varphi'_{N\theta,q',u}(z)|$. 
Therefore, by Lemma \ref{LemJul}, for every $(w,z) \in S$, we have $\sup_N \Vert D A_{\theta,q',u}^N (w,z)\Vert =+\infty$. 
\end{proof}

\subsubsection*{Acknowledgements} 
Both authors would like to thank CNRS-Imperial ``Abraham de Moivre'' International Research Laboratory for their kind support 
and hospitality during the visit of Berteloot to Imperial College in Summer 2019, when this research was carried out. 
Cheraghi is grateful to EPSRC(UK) for grant No. EP/M01746X/1 - rigidity and small divisors in holomorphic dynamics. 

\bibliographystyle{amsalpha}
\bibliography{Data}
\end{document}